\newtheorem{thm}{Theorem}[section]
\newtheorem{cor}[thm]{Corollary}
\newtheorem{lem}[thm]{Lemma}
\theoremstyle{definition}
\newtheorem{defn}[thm]{Definition}
\theoremstyle{remark}
\newtheorem{rem}[thm]{Remark}
\newtheorem{exm}[thm]{Example}
\numberwithin{equation}{section}
\begin{document}

\title{Extensions of hom-Lie color algebras}
\author[1]{A. R. Armakan}
\author[2]{S. Silvestrov\thanks{Corresponding author: E-mail: sergei.silvestrov@mdh.se}}
\author[1]{M. R. Farhangdoost}

\affil[1]{Department of Mathematics, College of Sciences, Shiraz University, P.O. Box 71457-44776, Shiraz, Iran.}
\affil[2]{Division of Applied Mathematics,
School of Education, Culture and Communication,
 M\"{a}lardalen University, Box 883, 72123 V{\"a}ster{\aa}s, Sweden.}
\date{March 21, 2017}

\maketitle
\begin{abstract} In this paper we study (non-Abelian) extensions of a given hom-Lie color algebra and provide a geometrical interpretation of extensions. In particular, we characterize an extension of a hom-Lie algebra $\mathfrak{g}$ by another hom-Lie algebra $\mathfrak{h}$ and we discuss the case where $\mathfrak{h}$ has no center. We also deal with the setting of covariant exterior derivatives, Chevalley derivative, curvature and the Bianchi identity for the possible extensions in differential geometry. Moreover, we find a cohomological obstruction to the existence of extensions of hom-Lie color algebras, i. e. we show that in order to have an extendible hom-Lie color algebra, there should exist a trivial member of the third cohomology.\\\\
\textbf{M.S.C. 2010:} 17B56, 17B75, 17B40.\\
\textbf{Keywords:} Hom-Lie color algebras, Extensions of hom-Lie color algebras, Cohomology of hom-Lie color algebras.
\end{abstract}

\section{Introduction}
The investigations of various quantum deformations (or $q$-deformations) of Lie algebras began a period of rapid expansion in 1980's stimulated by introduction of quantum groups motivated by applications to the quantum Yang-Baxter equation, quantum inverse scattering methods and constructions of the quantum deformations of universal enveloping algebras of semi-simple Lie algebras.
Since then several other versions of $q$-deformed Lie algebras have appeared, especially in physical contexts such as string theory, vertex models in conformal field theory, quantum mechanics and quantum field theory in the context of deformations of infinite-dimensional algebras, primarily the Heisenberg algebras, oscillator algebras and Witt and Virasoro algebras
\cite{AizawaSaito,ChaiElinPop,ChaiIsLukPopPresn,ChaiKuLuk,ChaiPopPres,CurtrZachos1,DamKu,DaskaloyannisGendefVir,Hu,Kassel92,LiuKQuantumCentExt,LiuKQCharQuantWittAlg,LiuKQPhDthesis}. In these pioneering works it has been in particular descovered that in these $q$-deformations of Witt and Visaroro algebras and some related algebras, some interesting $q$-deformations of Jacobi identities, extanding Jacobi identity for Lie algebras, are satisfied. This has been one of the initial motivations for the development of general quasi-deformations and discretizations of Lie algebras of vector fields using more general $\sigma$-derivations (twisted derivations) in \cite{HLS}, and introduction of abstract quasi-Lie algebras and subclasses of quasi-Hom-Lie algebras and Hom-Lie algebras as well as their general colored (graded) counterparts in \cite{HLS,LS1,LSGradedquasiLiealg,Czech:witt,LS2}. These generalized Lie algebra structures with (graded) twisted  skew-symmetry and twisted Jacobi conditions by linear maps are tailored to encompass within the same algebraic framework such quasi-deformations and discretizations of Lie algebras of vector fields using $\sigma$-derivations, describing general descritizations and deformations of derivations with twisted Leibniz rule, and the well-known generalizations of Lie algebras such as color Lie algebras which are the natural generalizations of Lie algebras and Lie superalgebras. Quasi-Lie algebras are nonassociative algebras for which the skew-symmetry and the Jacobi identity are twisted by several deforming twisting maps and also the Jacobi identity in quasi-Lie and quasi-Hom-Lie algebras in general contains six twisted triple bracket terms.
Hom-Lie algebras is a special class of quasi-Lie algebras with the bilinear product satisfying the non-twisted skew-symmetry property as in Lie algebras, whereas the Jacobi identity contains three terms twisted by a single linear map, reducing to the Jacobi identity for ordinary Lie algebras when the linear twisting map is the identity map. Subsequently, hom-Lie admissible algebras have been considered in \cite{MS} where also the hom-associative algebras have been introduced and shown to be hom-Lie admissible natural generalizations of associative algebras corresponding to hom-Lie algebras. In \cite{MS}, moreover several other interesting classes of hom-Lie admissible algebras generalising some non-associative algebras, as well as examples of finite-dimentional hom-Lie algebras have been described.

Since these pioneering works \cite{HLS,LS1,LSGradedquasiLiealg,LS2,LS3,MS}, hom-algebra structures have become a popular area with increasing number of publications in various directions. Hom-Lie algebras, hom-Lie superalgebras and Hom-Lie color algebras are important special classes of color (graded) quasi-Lie algebras introduced first by Larsson and Silvestrov \cite{LSGradedquasiLiealg,LS2}.  Hom-Lie algebras and hom-Lie superalgebras have been studied in different aspects by Makhlouf, Silvestrov, Sheng, Ammar, Yau and other authors \cite{SB,SC,SR,AEM,MS,HomDeform,HomHopf,HomAlgHomCoalg,YN,BM,MAK,DER,
Yau:EnvLieAlg,Yau:HomolHomLie,Yau:HomBial,LarssonSigSilvJGLTA2008, RichardSilvestrovJA2008, RichardSilvestrovGLTbdSpringer2009, SigSilvGLTbdSpringer2009, SilvestrovParadigmQLieQhomLie2007},
and Hom-Lie color algebras have been considered for example in \cite{YN,CCH,spl,COH}.
We wish to mention specially \cite{AmmarMakhloufHomLieSupAlg2010}, where the constructions of Hom-Lie and quasi-hom Lie algebras based on twisted discretizations of vector fields \cite{HLS} and Hom-Lie admissible algebras have been extended to Hom-Lie superalgebras, a subclass of graded quasi-Lie algebras \cite{LS2,LSGradedquasiLiealg}.
We also wish to mention that $\mathbb{Z}_3$-graded generalizations of supersymmetry, $\mathbb{Z}_3$-graded algebras, ternary structures and related algebraic models
for classifications of elementary particles and unification problems for interactions, quantum gravity and non-commutative gauge theories \cite{Kerner7,Kerner,Kerner2,Kerner4,Kerner6} also provide interesting examples related to Hom-associative algebras, graded Hom-Lie algebras, twisted differential calculi and $n$-ary Hom-algebra structures. It would be a project of grate interest to extend and apply all the constructions and results in the present paper in the relevant contexts of the articles \cite{Kerner7,AmmarMabroukMakhloufCohomnaryHNLalg2011,AmmarMakhloufHomLieSupAlg2010, ArnlindMakhloufSilvTernHomNambuJMP2010,ArnlindMakhloufSilvnaryHomLieNambuJMP2011,ArnlindKituoniMakhloufSilv3aryCohom,
Kerner,Kerner2,Kerner4,LS2,LSGradedquasiLiealg,MS}.

In the Section \ref{sec:DefinNotsHomColorLie} of this paper hom-Lie algebras, hom-Lie superalgebras, hom-Lie color algebras and some useful related definitions are presented. In the Section \ref{sec:ExtentionsHomLie}, we introduce hom-Lie color algebra extensions with an emphasize on geometric aspects. Finally in Section \ref{sec:CohomObstruction}, introducing the Chevalley cohomology for hom-Lie color algebras, we find a cohomological obstruction to the existence of extensions.

\section{Definitions and notations for hom-Lie algebras, hom-Lie superalgebras, hom-Lie color algebras} \label{sec:DefinNotsHomColorLie}

\begin{defn} (\cite{HLS,LS1,LSGradedquasiLiealg,LS2,MS})
A hom-Lie algebra is a triple $(\mathfrak{g},[,], \alpha)$, where $\mathfrak{g} $ is a linear space equipped with a skew-symmetric bilinear map $[,]: \mathfrak{g} \times \mathfrak{g} \rightarrow \mathfrak{g}$ and a linear map $\alpha :\mathfrak{g} \rightarrow \mathfrak{g} $ such that
$$[\alpha(x),[y,z]]+[\alpha(y),[z,x]]+[\alpha(z),[x,y]]=0, $$
for all $x,y,z \in \mathfrak{g}$ , which is called hom-Jacobi identity.
\end{defn}
A hom-Lie algebra is called a multiplicative hom-Lie algebra if $\alpha$ is an algebraic morphism,
$$\alpha([x,y])=[\alpha(x),\alpha(y)],$$
for any $x,y\in \mathfrak{g}$.

We call a hom-Lie algebra regular if $\alpha$ is an automorphism.

A linear subspace $\mathfrak{h}\subseteq \mathfrak{g}$ is a hom-Lie sub-algebra of $(\mathfrak{g},[,], \alpha)$ if $$\alpha(\mathfrak{h})\subseteq \mathfrak{h},$$ and $\mathfrak{h}$ is closed under the bracket operation, i.e.
$$[x_{1},x_{2}]_{g}\in \mathfrak{h},$$
for all $x_{1},x_{2}\in \mathfrak{h}.$

Let $(\mathfrak{g},[,], \alpha)$ be a multiplicative hom-Lie algebra. Denote by $\alpha^{k}$  the $k$-times composition of $\alpha$ by itself, for any nonnegative integer $k$, i.e.
$$\alpha^{k}=\alpha \circ ... \circ \alpha ~~~~~(k-times),$$
where we define $\alpha^{0}=Id$ and $\alpha^{1}=\alpha$. For a regular hom-Lie algebra $\mathfrak{g}$, let
$$\alpha^{-k}=\alpha^{-1} \circ ... \circ \alpha^{-1}~~~~~(k-times).$$

\begin{defn}(\cite{BahturinMikhPetrZaicevIDLSbk92,MikhZolotykhCALSbk95,ScheunertGLA,ScheunertGTC})
Given a commutative group $\Gamma$ which will be in what follows referred to as the grading group, a commutation factor on $\Gamma$ with values in the multiplicative group $K\setminus \{0\}$ of a field $K$ of characteristic 0 is a map
$$\varepsilon: \Gamma \times \Gamma \rightarrow K\setminus \{0\},$$
satisfying three properties:
\begin{itemize}
  \item [1.] $\varepsilon(\alpha+\beta,\gamma)=\varepsilon(\alpha,\gamma)\varepsilon(\beta,\gamma),$
  \item [2.] $\varepsilon(\alpha,\gamma+\beta)=\varepsilon(\alpha,\gamma)\varepsilon(\alpha,\beta),$
  \item [3.] $\varepsilon(\alpha,\beta)\varepsilon(\beta,\alpha)=1.$
\end{itemize}
A $\Gamma$-graded $\varepsilon$-Lie algebra (or a color Lie algebra) is a $\Gamma$-graded linear space
$$X=\bigoplus_{\gamma\in \Gamma} X_{\gamma},$$
with a bilinear multiplication (bracket) $[.,.]:X\times X \rightarrow X$ satisfying the following properties:
\begin{itemize}
\item [1.] \textbf{Grading axiom:} $[X_{\alpha}, X_{\beta}]\subseteq X_{\alpha+\beta},$
\item [2.] \textbf{Graded skew-symmetry:} $[a,b]=-\varepsilon(\alpha,\beta)[b,a],$
\item [3.]  \textbf{Generalized Jacobi identity:}
\begin{equation}\label{eq:ColorLieJacobiId}
\varepsilon(\gamma,\alpha)[a,[b,c]]+ \varepsilon(\beta,\gamma)[c,[a,b]] +\varepsilon(\alpha,\beta)[b,[c,a]]=0
\end{equation}
\end{itemize}
for all $a\in X_{\alpha}, b\in X_{\beta}, c\in X_{\gamma}$ and $\alpha,\beta,\gamma \in \Gamma$.
\end{defn}

Let $\mathfrak{g}=\bigoplus_{\gamma\in \Gamma}\mathfrak{g}_{\gamma}$ and $\mathfrak{h}=\bigoplus_{\gamma\in \Gamma}\mathfrak{h}_{\gamma}$ be
a $\Gamma$-graded linear space. The elements of $X_{\gamma}$ are called homogenous of degree $\gamma$, for all $\gamma\in \Gamma$.
Let $\mathfrak{g}=\bigoplus_{\gamma\in \Gamma}\mathfrak{g}_{\gamma}$ and $\mathfrak{h}=\bigoplus_{\gamma\in \Gamma}\mathfrak{h}_{\gamma}$ be two $\Gamma$-graded linear spaces. A linear mapping $f:\mathfrak{g}\rightarrow \mathfrak{h}$ is said to be graded or homogenous of the degree $\mu\in \Gamma$ if $$f(\mathfrak{g}_{\gamma})\subseteq \mathfrak{h}_{\gamma+\mu},$$
for all $\gamma\in \Gamma$. A graded linear mapping $f$ is said to be homogenous of degree zero if $$f(\mathfrak{g}_{\gamma})\subseteq \mathfrak{h}_{\gamma},$$ holds for any $\gamma\in \Gamma$.
Sometimes such $f$ are said to be even.

A special class of color quasi-Lie algebras \cite{LSGradedquasiLiealg,LS2}
are coloured hom-Lie algebras.

\begin{defn} (\cite{LSGradedquasiLiealg,LS2,YN})
A hom-Lie color algebra is a quadruple $(\mathfrak{g},[.,.],\varepsilon,\alpha)$ consisting of a $\Gamma$-graded linear space $\mathfrak{g}=\bigoplus_{\gamma\in \Gamma}\mathfrak{g}_{\gamma}$, a bi-character $\varepsilon$, a graded bilinear mapping $[.,.]:\mathfrak{g}\times \mathfrak{g}\rightarrow \mathfrak{g}$ (i.e. $[\mathfrak{g}_{a},\mathfrak{g}_{b}]\subseteq \mathfrak{g}_{a+b}$, for all $a,b \in \Gamma$) and graded homomorphism $\alpha:\mathfrak{g}\rightarrow \mathfrak{g}$ of grading degree zero ($\alpha (\mathfrak{g}_{\gamma})\subseteq \mathfrak{g}_{\gamma}$ for all $\gamma \in \Gamma$) such that for homogeneous elements $x,y,z\in \mathfrak{g}$ we have
\begin{itemize}
  \item [1.] \textbf{$\varepsilon$-skew symmetric:} $[x,y]=-\varepsilon(x,y)[y,x],$
  \item [2.] \textbf{$\varepsilon$-hom-Jacobi identity:}
  \begin{equation*} \label{eq:ColorHomLieJacobiId}
  \sum_{cyclic\{x,y,z\}}\varepsilon(z,x)[\alpha(x),[y,z]]=0.
  \end{equation*}
\end{itemize}
\end{defn}
In particular, if $\alpha$ is a morphism of color Lie algebras i.e.
$$\alpha\circ [.,.]=[.,.]\circ \alpha^{\otimes2},$$
then we call $(\mathfrak{g},[,],\varepsilon,\alpha)$, a multiplicative hom-Lie color algebra.\\

\begin{exm} {\rm (\cite{COH})}
Let $(\mathfrak{g},[,],\varepsilon)$ be a color Lie algebra and $\alpha$ be a Lie color algebra morphism. Then
$$(\mathfrak{g},[,]_{\alpha}:=\alpha\circ [.,.],\varepsilon,\alpha)$$
 is a multiplicative hom-Lie color algebra.
\end{exm}

\begin{defn} (\cite{COH})
Let $(\mathfrak{g},[,],\varepsilon,\alpha)$ be a hom-Lie color algebra. For any nonnegative integer $k$, a linear map
$$D: \mathfrak{g} \rightarrow \mathfrak{g}$$
of degree $d$ is called a homogenous $\alpha^{k}$-derivation of the multiplicative hom-Lie color algebra $(\mathfrak{g},[,],\varepsilon,\alpha)$, if
\begin{itemize}
\item[1.] $D(\mathfrak{g}_{\gamma})\subseteq \mathfrak{g}_{\gamma+d}$, for all $\gamma\in \Gamma$
\item[2.] $[D,\alpha]=0$, i.e. $D\circ \alpha= \alpha \circ D,$
\item[3.] $D([x,y]_{g})=[D(x),\alpha^{k}(y)]_{\mathfrak{g}}+\varepsilon(d,x)[\alpha^{k}(x),D(y)]_{\mathfrak{g}}$, for all $x,y\in \mathfrak{g}$.
\end{itemize}
\end{defn}
Denote by $Der^{\gamma}_{\alpha^{k}}(\mathfrak{g})$ the set of all homogenous $\alpha^{k}$-derivations of the multiplicative hom-Lie color algebra $(\mathfrak{g},[,], \alpha)$.
The space
$$Der(\mathfrak{g})=\bigoplus_{k\geq 0}Der_{\alpha^{k}}(\mathfrak{g}),$$
provided with the color-commutator and the following linear map
$$\widetilde{\alpha}:Der(\mathfrak{g})\rightarrow Der(\mathfrak{g}), \hspace{1cm} \widetilde{\alpha}(D)=D\circ \alpha,$$
is a color Lie algebra.

For any $x\in \mathfrak{g}$ satisfying $\alpha(x)=x$, define $$ad_{k}(x):\mathfrak{g} \rightarrow \mathfrak{g}$$ by
$$ad_{k}(x)(y)=[\alpha^{k}(y),x]_{\mathfrak{g}},$$
for all $y\in \mathfrak{g}$.

It is shown in \cite{COH} that $ad_{k}(x)$ is a $\alpha^{k+1}$-derivation, which we call an inner $\alpha^{k}$-derivation.
So
$$Inn_{\alpha^{k}}(\mathfrak{g})=\{[\alpha^{k-1}(.),x]_{\mathfrak{g}}|x\in \mathfrak{g}, \alpha(x)=x\}.$$

\section{Extensions of hom-Lie color algebras} \label{sec:ExtentionsHomLie}
In this section we clarify what we mean by an extension of a hom-Lie color algebra. Although one can see that extensions of a given Abelian hom-Lie color algebra is characterized by elements of its second cohomology group, we concentrate on some geometric aspects in this research. The cohomology has been studied for Lie superalgebras and color Lie algebras \cite{PiontkovskiSilvestrovC3dCLA,ChenPetitOystaeyenCOHCHLA,ScheunertCOH2,ScheunertZHA} and hom-Lie color algebras \cite{COH}.
\begin{defn}
Let $\mathfrak{g} $,$\mathfrak{h} $ be two hom-Lie color algebras, we call $\mathfrak{e} $ an extension of the hom-Lie color algebra $\mathfrak{g}$ by $\mathfrak{h} $, if there exists a short exact sequence
$$0\rightarrow \mathfrak{h}\rightarrow \mathfrak{e} \rightarrow \mathfrak{g}\rightarrow 0,$$
of hom-Lie color algebras and their morphisms.
\end{defn}

Two extensions
$$\xymatrix{0  \ar[r] & \mathfrak{h} \ar[r]^{i_{k}} & \mathfrak{e}_{k} \ar[r]^{p_{k}} & \mathfrak{g} \ar[r]& 0}(k=1,2)$$
are equivalent if there is an isomorphism $f:\mathfrak{e}_{1} \rightarrow \mathfrak{e}_{2}$ such that $f\circ i_{1}=i_{2}$ and $p_{2}\circ f=p_{1}$.

We want to study the possible extensions, so suppose there exists an extension
$$\xymatrix{0  \ar[r] & \mathfrak{h} \ar[r]^{i} & \mathfrak{e} \ar[r]^{p} & \mathfrak{g} \ar[r]& 0}$$
and let $s:\mathfrak{g}\rightarrow \mathfrak{e}$ be a graded linear map of even degree such that $p\circ s=Id_{\mathfrak{g}}$.
We define
\begin{align}\label{fi}
\varphi &: \mathfrak{g}\rightarrow Der_{\alpha^{k}}(\mathfrak{h}),\\
\varphi_{x}(y)&=[\alpha^{k-1}(s(x)),y],\nonumber
\end{align}
and
\begin{align}\label{ro}
\rho:& \bigwedge^{2}_{\Gamma-graded}\mathfrak{g}\rightarrow \mathfrak{h},\\
\rho(x,y)=&[s(x),s(y)]-s([x,y]).\nonumber
\end{align}
The following lemma shows some properties of the above maps which we will use later on this research.
\begin{lem}\label{lem1}
The maps $\varphi$ and $\rho$ defined in \eqref{fi} and \eqref{ro} satisfy
\begin{equation}\label{1}
[\varphi_{x},\varphi_{y}]-\varphi_{[x,y]}=ad_{k-1}(\rho(x,y)),
\end{equation}
\begin{equation}\label{2}
\sum_{cyclic\{x,y,z\}}\varepsilon(x,z)(\varphi_{x}\rho(y,z)-\rho([x,y],z))=0.
\end{equation}

\end{lem}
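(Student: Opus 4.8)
The plan is to carry out both computations inside $\mathfrak{e}$, identifying $\mathfrak{h}$ with the ideal $i(\mathfrak{h})=\ker p\trianglelefteq\mathfrak{e}$ and using throughout the single structural identity that defines $\rho$, namely $[s(x),s(y)]=s([x,y])+\rho(x,y)$ for homogeneous $x,y\in\mathfrak{g}$. The ingredients are: the multiplicativity $\alpha([u,v])=[\alpha(u),\alpha(v)]$ in $\mathfrak{e}$, the compatibility of $\alpha$ with the even section $s$ on graded components, the fact that $\rho$ takes values in $\mathfrak{h}$ (so that its brackets with $\alpha^{k-1}s(x)$ reproduce $\varphi_{x}$ restricted to $\mathfrak{h}$), and the $\varepsilon$-hom-Jacobi identity in $\mathfrak{e}$, which is the sole nontrivial engine driving both relations.

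For \eqref{1}, both sides are $\alpha^{k}$-derivations of $\mathfrak{h}$, so it suffices to evaluate on an arbitrary homogeneous $z\in\mathfrak{h}$. I would expand the colour commutator as $[\varphi_{x},\varphi_{y}]=\varphi_{x}\circ\varphi_{y}-\varepsilon(x,y)\,\varphi_{y}\circ\varphi_{x}$, so that the left-hand side applied to $z$ becomes $[\alpha^{k-1}s(x),[\alpha^{k-1}s(y),z]]-\varepsilon(x,y)[\alpha^{k-1}s(y),[\alpha^{k-1}s(x),z]]-\varphi_{[x,y]}(z)$. The $\varepsilon$-hom-Jacobi identity in $\mathfrak{e}$ folds the two nested brackets into a single bracket whose inner entry is $[\alpha^{k-1}s(x),\alpha^{k-1}s(y)]=\alpha^{k-1}[s(x),s(y)]=\alpha^{k-1}s([x,y])+\alpha^{k-1}\rho(x,y)$, using multiplicativity and the definition of $\rho$. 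The term carrying $\alpha^{k-1}s([x,y])$ is exactly $\varphi_{[x,y]}(z)$ and cancels, while the term carrying $\alpha^{k-1}\rho(x,y)$ becomes, after applying $\varepsilon$-skew-symmetry to reconcile the left bracket in $\varphi$ with the right bracket in $ad$, precisely $ad_{k-1}(\rho(x,y))(z)$. The only real care here is matching the $\varepsilon$-coefficient coming from the colour commutator with the coefficients appearing in the hom-Jacobi identity.

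For \eqref{2}, which is the cocycle (curvature) relation, I would start from the $\varepsilon$-hom-Jacobi identity in $\mathfrak{e}$ evaluated on the triple $s(x),s(y),s(z)$, suitably $\alpha$-twisted so that the resulting outer bracket matches the power $\alpha^{k-1}$ built into $\varphi$. Expanding each inner bracket by $[s(y),s(z)]=s([y,z])+\rho(y,z)$ and then expanding once more, every nested bracket splits into an $s(\mathfrak{g})$-part and an $\mathfrak{h}$-part: the brackets $[\alpha^{k-1}s(x),\rho(y,z)]$ are exactly $\varphi_{x}(\rho(y,z))$, while the brackets $[s(x),s([y,z])]$ produce $s([x,[y,z]])+\rho(x,[y,z])$. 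Collecting contributions, the purely $s(\mathfrak{g})$-valued terms assemble into $s$ applied to the cyclic colour-Jacobi combination in $\mathfrak{g}$, which vanishes, while the remaining $\mathfrak{h}$-valued terms, read off as the $\ker p$ component, are precisely the cyclic sum $\sum_{cyclic\{x,y,z\}}\varepsilon(x,z)(\varphi_{x}\rho(y,z)-\rho([x,y],z))$, yielding \eqref{2}.

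The main obstacle in both parts is organisational rather than conceptual: keeping the $\varepsilon$-commutation factors consistent across the cyclic sum, and reconciling the single twist $\alpha$ occurring in the hom-Jacobi identity with the power $\alpha^{k-1}$ built into $\varphi$. Both are handled by systematic use of multiplicativity, of $\alpha\circ s=s\circ\alpha$ on homogeneous elements, and of the bicharacter relations $\varepsilon(a+b,c)=\varepsilon(a,c)\varepsilon(b,c)$ and $\varepsilon(a,b)\varepsilon(b,a)=1$. Once this bookkeeping is fixed, the two identities reduce, exactly as in the classical ungraded non-Abelian extension theory, to the hom-Jacobi identity in $\mathfrak{e}$ and the colour Jacobi identity in $\mathfrak{g}$ respectively.
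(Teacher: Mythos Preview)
Your argument for \eqref{1} is essentially the paper's, just unpacked: the paper writes $\varphi_{x}=ad_{k-1}(s(x))$ and then invokes in one line that $ad_{k-1}$ intertwines brackets, i.e.\ $[ad_{k-1}(s(x)),ad_{k-1}(s(y))]=ad_{k-1}([s(x),s(y)])$, which is exactly the $\varepsilon$-hom-Jacobi computation you spell out when evaluating on $z\in\mathfrak{h}$.

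For \eqref{2} you take a genuinely different route. You propose to apply the $\varepsilon$-hom-Jacobi identity in $\mathfrak{e}$ directly to the triple $s(x),s(y),s(z)$, substitute $[s(y),s(z)]=s([y,z])+\rho(y,z)$ in each inner bracket, and then split the outcome along $\mathfrak{e}=\mathfrak{h}\oplus s(\mathfrak{g})$: the $s(\mathfrak{g})$-component vanishes by the colour Jacobi identity in $\mathfrak{g}$, and the $\mathfrak{h}$-component is exactly the cyclic sum in \eqref{2}. The paper does something else: it brackets the left-hand side of \eqref{2} against an auxiliary element $w$, uses the derivation property of $\varphi_{x}$ together with the already-proved relation \eqref{1} to rewrite everything as compositions and colour commutators of the operators $\varphi_{\bullet}$, and then observes that the resulting operator expression vanishes by the Jacobi-type identities among $\varphi_{x},\varphi_{y},\varphi_{z},\varphi_{[x,y]},\dots$. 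Your approach is more elementary and, in fact, strictly stronger: it yields \eqref{2} as an identity in $\mathfrak{h}$, whereas the paper's computation, taken at face value, only shows that $[\,\text{LHS of \eqref{2}},w\,]=0$ for all $w$, i.e.\ proves \eqref{2} modulo $Z(\mathfrak{h})$. The paper's route, on the other hand, has the virtue of showing transparently how \eqref{2} is forced by \eqref{1} together with the operator Jacobi identity in $Der(\mathfrak{h})$, which is the perspective needed later when reading $\delta_{\varphi}\rho=0$ as a Bianchi identity.
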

\begin{proof}
First note that $\varphi_{x}=ad_{k-1}(s(x))$. So we have
\begin{align*}
&[\varphi_{x},\varphi_{y}]-\varphi_{[x,y]}=[ad_{k-1}(s(x)),ad_{k-1}(s(y))]\\&-ad_{k-1}(s([x,y]))=ad_{k-1}([s(x),s(y)]-s([x,y]))\\
&=ad_{k-1}(\rho(x,y)),
\end{align*}
which proves the first equality. For the second equality we have
\begin{align*}
&\sum_{cyclic\{x,y,z\}}\varepsilon(x,z)[\varphi_{x}\rho(y,z)-\rho([x,y],z),w]\\
&=\sum_{cyclic\{x,y,z\}}\varepsilon(x,z)(\varphi_{x}[\rho(y,z),w]\\&-\varepsilon(x,y+z)[\rho(y,z),\varphi_{x}(w)]-[\rho([x,y],z),w])\\
&=\sum_{cyclic\{x,y,z\}}\varepsilon(x,z) (\varphi_{x}[\varphi_{y},\varphi_{z}]-\varphi_{x}\varphi_{[y,z]}\\
&-\varepsilon(x,y+z)[\varphi_{y},\varphi_{z}]\varphi_{x} +\varepsilon(x,y+z)\varphi_{[y,z]}\varphi_{x} \\ &-[\varphi_{[x,y]},\varphi_{z}]+\varphi_{[[x,y],z]})w\\
&=\sum_{cyclic\{x,y,z\}}\varepsilon(x,z)([\varphi_{x},[\varphi_{y},\varphi_{z}]] -[\varphi_{x},\varphi_{[y,z]}]\\
&-[\varphi_{[x,y]},\varphi_{z}]+\varphi_{[[x,y],z]})w =0.
\end{align*}
\end{proof}
Therefore, using $\varphi$ and $\rho$ which satisfy \eqref{1} and \eqref{2}, the hom-Lie color algebra structure on $\mathfrak{e}=\mathfrak{h}\oplus s(\mathfrak{g})$ will be in the following form:
\begin{align*}
&[y_{1}+s(x_{1}),y_{2}+s(x_{2})]_{\mathfrak{e}} =([y_{1},y_{2}]_{\mathfrak{h}}+\varphi_{x_{1}}y_{2} \\ &-\varepsilon(y_{1},x_{2})\varphi_{x_{2}}y_{1}+\rho(x_{1},x_{2}))+s([x_{1},x_{2}]_{\mathfrak{g}}).
\end{align*}

We can see $\varphi$ as a connection in the sense of \cite{KMP} and $\rho$ as its curvature. Moreover, in order to give \eqref{2} a more geometric image, we have to use a graded version of the Chevalley coboundary operator which makes it a $\Gamma$-graded exterior covariant derivative. There is a notation that we will need throughout this research, which is a modification of the one introduced in \cite{ScheunertGTC}:
$$\varepsilon_{k}:\mathcal{S}_{k}\times \Gamma^{k}\rightarrow K,$$
$$\varepsilon_{k}(\sigma^{-1},a_{1},...,a_{k})=\prod_{\substack{i<j,\\ \sigma(i)>\sigma(j)}} \varepsilon(a_{i},a_{j}),$$
for all $a_{1},...,a_{k}\in \Gamma$ and $\sigma \in \mathcal{S}_{k}$. If we write
$$\sigma (a)=(a_{\sigma(1)},...,a_{\sigma (k)}),$$
for $a=(a_{1},...,a_{k})\in (\Gamma)^{k}$, then we have
$$\varepsilon_{k}(\sigma \circ \tau,a)=\varepsilon_{k}(\sigma,a)\varepsilon_{k}(\tau,\sigma (a)).$$
Moreover, we simply show the degree of any homogenous element $x_{i}\in \mathfrak{g}$ by $\overline{x_{i}}$ through the rest.

Now, for a linear space $V$, the space of graded $p$-linear skew symmetric maps $\mathfrak{g}^{p}\rightarrow V$ of degree $w$ is denoted by $A^{p,w}_{gskew}(\mathfrak{g},V)$, i.e. the space of maps of the following form
$$\psi(x_{1},...,x_{p}) \in V_{w+\overline{x_{1}}+...+\overline{x_{p}}},$$
$$\psi(x_{1},...,x_{p})=-\varepsilon(x_{i},x_{i+1})\psi(x_{1},...,x_{i+1},x_{i},...,x_{p}),$$
for all homogenous elements $x_{i}$, $i=1,...,k$.
We see that when $\psi\in A^{p,w}_{gskew}(\mathfrak{g},\mathfrak{h})$, we have
$$\varepsilon_{p}(\sigma,x)\psi(x_{\sigma (1)},...,x_{\sigma (p)})=sign(\sigma)\psi(x_{1},...,x_{p}).$$

\begin{defn}
Using a connection like the one defined in \eqref{fi}, the $\Gamma$-graded version of the covariant exterior derivative is defined by
$$\delta_{\varphi}:A^{p,w}_{gskew}(\mathfrak{g},\mathfrak{h})\rightarrow A^{p,w}_{gskew}(\mathfrak{g},\mathfrak{h}),$$
\begin{align*}
(\delta_{\varphi}\psi)(x_{0},...,x_{p})&=\sum_{i=0}^{p}(-1)^{i}\theta_{i}(x)\varphi_{x_{i}}(\psi(x_{0},...,\widehat{x_{i}},...,x_{p}))\\
&+\sum_{i<j}(-1)^{j}\theta_{ij}(x)\psi(\alpha(x_{0}),...,\alpha(x_{i-1}),\\
&[x_{i},x_{j}],\alpha(x_{i-1}),...,\widehat{x_{j}},...,\alpha(x_{p})),
\end{align*}
for all homogenous elements $x_{i}$, $i=1,...,k$ where $$\theta_{i}(x)=\varepsilon(\overline{x_{1}}+...+\overline{x_{i-1}},\overline{x_{i}})$$ and $$\theta_{ij}(x)=\varepsilon(\overline{x_{i+1}}+...+\overline{x_{j-1}},\overline{x_{j}}).$$
\end{defn}
We see that for $\psi \in A^{p,w}_{gskew}(\mathfrak{g},\mathfrak{h})$ and $\zeta \in A^{q,z}_{gskew}(\mathfrak{g},\mathds{R})$ which is a form of degree $q$ and weight $z$, the operator $\delta_{\varphi}$ satisfies the condition for a graded covariant exterior derivative, i.e.
$$\delta_{\varphi}(\zeta\wedge\psi)=\delta\zeta\wedge\psi+\varepsilon(\delta_{\varphi},q)\zeta\wedge\delta_{\varphi}\psi,$$
where
\begin{align*}
(\delta\zeta)(x_{0},...,x_{q})=&\sum_{i<j}(-1)^{j}\theta_{ij}(x)\psi(\alpha(x_{0}),...,\alpha(x_{i-1}),\\ &[x_{i},x_{j}],\alpha(x_{i-1}),...,\widehat{x_{j}},...,\alpha(x_{p})),
\end{align*}
is the color version of the Chevalley coboundary operator and the module structure is given by
\begin{align*}
 &(\zeta\wedge\psi)(x_{1},...,x_{p+q})  =\frac{1}{p!q!}\sum_{\sigma\in \mathcal{S}_{p+q}}sign(\sigma)\varepsilon(y,\eta_{q}(\sigma,x)) \\
 &\varepsilon_{p+q}(\sigma,x) \zeta(x_{\sigma 1},..., x_{\sigma q})\psi(x_{\sigma(q+1)},...,x_{\sigma (q+p)}),
\end{align*}
where $\eta_{i}(\sigma,x)=\overline{x_{\sigma 1}}+...+\overline{x_{\sigma i}}$.

Also, for $\psi\in A^{p,w}_{gskew}(\mathfrak{g},\mathfrak{h})$ and $\zeta\in A^{q,z}_{gskew}(\mathfrak{g},\mathfrak{h})$, we have
\begin{align*}
 & [\psi,\zeta]_{\wedge}(x_{1},...,x_{p+q})  = \frac{1}{p!q!}\sum_{\sigma}sign(\sigma)\varepsilon(z,\eta_{p}(\sigma,x)) \\
 & \varepsilon_{p+q}(\sigma,x) [\psi(x_{\sigma(1)},...,x_{\sigma (p)}), \zeta(x_{\sigma(p+1)},...,x_{\sigma(p+q)})]_{\mathfrak{h}}.
\end{align*}

Thus, we can see $s$ as a connection in the sense of the horizontal lift of vector fields on base of a bundle. Moreover, $\varphi$ is an induced connection. See \cite{KMP} for more background information.

So the formula \eqref{2} will become the Bianchi identity
$$\delta_{\varphi} \rho=0.$$
Moreover, we deduce that
\begin{equation}\label{209}
\delta_{\varphi}\delta_{\varphi}(\psi)=[\rho,\psi]_{\wedge}, \psi\in A^{p,w}_{gskew}(\mathfrak{g},\mathfrak{h}).
\end{equation}

Therefore, if $\varphi:\mathfrak{g}\rightarrow Der_{\alpha^{k}}(\mathfrak{h})$ is a homomorphism of color hom- Lie algebras or $$\varphi:\mathfrak{g}\rightarrow End(\mathfrak{h}),$$ is a representation in a graded vector space, \eqref{209} shows the hom-color version application of the Chevalley cohomology.

We can now complete the hom-Lie color algebra structure on $\mathfrak{e}=\mathfrak{h}\oplus s(\mathfrak{g})$ as follows
\begin{align}\label{STR}
&[y_{1}+s(x_{1}),y_{2}+s(x_{2})]=([y_{1},y_{2}]+\varphi_{x_{1}}y_{2}\\ \nonumber
&-\varepsilon(y_{1},x_{2})\varphi_{x_{2}}y_{1}+\rho(x_{1},x_{2}))+s([x_{1},x_{2}]).
\end{align}
One can check that \eqref{STR} gives $\mathfrak{h}\oplus s(\mathfrak{g})$ a hom-Lie color algebra structure, if $\varphi$ and $\rho$ satisfy \eqref{1} and \eqref{2}.
If we put $s'=s+b$ instead of $s$ , where $b:\mathfrak{g} \rightarrow \mathfrak{h} $ is an even linear map, we will have
\begin{align*}
\varphi'_{x}(y)&=[\alpha(s(x)+b(x)),y]=[\alpha(s(x)),y]+[\alpha(b(x)),y]\\&= \varphi_{x}+ad_{k-1}^{\mathfrak{h}}(b(x))
\end{align*}
and
\begin{align*}
\rho'(x,y)&=\rho(x,y)+\varphi_{x}b(y)-\varphi_{y}b(x)-b([x,y])\\&+[b(x),b(y)]\\
&=\rho(x,y)+\delta_{\varphi}b(x,y)+[b(x),b(y)],
\end{align*}
i.e.
$$\rho'=\rho+\delta_{\varphi}b+\frac{1}{2}[b,b]_{\wedge}.$$
Thus, so far we have proved the following theorem.
\begin{thm}\label{th}
Let $\mathfrak{g},\mathfrak{h}$ be two hom-Lie color algebras. The isomorphism classes of extensions of $\mathfrak{g}$ on $\mathfrak{h}$ i.e. the short exact sequences of the form
$$0\rightarrow \mathfrak{h}\rightarrow \mathfrak{e} \rightarrow \mathfrak{g}\rightarrow 0,$$
is in one to one correspondence with the data of the following form:\\
An even linear map $\varphi:\mathfrak{g}\rightarrow Der_{\alpha^{k}}(\mathfrak{h})$ and a graded even skew symmetric bilinear map $\rho:\mathfrak{g}\times \mathfrak{g}\rightarrow\mathfrak{h}$ such that
\begin{equation}\label{14}
[\varphi_{x},\varphi_{y}]-\varphi_{[x,y]}=ad_{k-1}(\rho(x,y)),
\end{equation}
and
\begin{equation}\label{15}
\sum_{cyclic\{x,y,z\}}\varepsilon(x,z)(\varphi_{x}\rho(y,z)-\rho([x,y],z))=0,
\end{equation}
or in other words $\delta_{\varphi}\rho=0.$
The extension which corresponds to $\varphi$ and $\rho$ is the vector space $\mathfrak{e}=\mathfrak{h}\oplus \mathfrak{g}$ whose hom-Lie color algebra structure is given by
\begin{align*}
&[y_{1}+s(x_{1}),y_{2}+s(x_{2})]_{\mathfrak{e}}=([y_{1},y_{2}]_{\mathfrak{h}}+\varphi_{x_{1}}y_{2}\\
&-\varepsilon(x_{2},y_{1})\varphi_{x_{2}}y_{1}+\rho(x_{1},x_{2}))+[x_{1},x_{2}]_{\mathfrak{g}},
\end{align*}
and its short exact sequence is
$$\xymatrix{0  \ar[r] & \mathfrak{h} \ar[r]^{\!\!\!\!\!\!\!\!\!\!i_{1}} & \mathfrak{h}\oplus \mathfrak{g}=\mathfrak{e} \ar[r]^{~~~~~~~~~~~~~~~~~~~~~~~~~~~~~pr_{2}} & \mathfrak{g} \ar[r]& 0}.$$
Two data $(\varphi,\rho)$ and $(\varphi',\rho')$ are equivalent if there exists a linear map $b:\mathfrak{g} \rightarrow \mathfrak{h}$ of degree zero such that
$$\varphi'_{x}= \varphi_{x}+ad_{k-1}^{\mathfrak{h}}(b(x)),$$
and
\begin{align*}
\rho'(x,y)&=\rho(x,y)+\varphi_{x}b(y)-\varepsilon(x,y)\varphi_{y}b(x)-b([x,y])\\&+[b(x),b(y)]\\
&=\rho(x,y)+\delta_{\varphi}b(x,y)+[b(x),b(y)].
\end{align*}
So the corresponding equivalence will be
$$\mathfrak{e}=\mathfrak{h}\oplus \mathfrak{g}\rightarrow \mathfrak{h}\oplus \mathfrak{g}=\mathfrak{e}'$$
$$y+x\mapsto y-b(x)+x.$$
Furthermore, the datum $(\varphi,\rho)$ represents a split extension if and only if $(\varphi,\rho)$ corresponds to a datum of the form $(\varphi',0)$ (so that $\varphi'$ is an isomorphism). In this case there exists a linear map $b:\mathfrak{g}\rightarrow \mathfrak{h}$ such that
$$\rho=-\delta_{\varphi}b-\frac{1}{2}[b,b]_{\wedge}.$$
\end{thm}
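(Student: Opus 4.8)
The plan is to establish the one-to-one correspondence by producing maps in both directions between the set of isomorphism classes of extensions and the set of data pairs $(\varphi,\rho)$ satisfying \eqref{14} and \eqref{15}, and then verifying that these maps are mutually inverse at the level of equivalence classes. First I would fix an extension $0\rightarrow \mathfrak{h}\rightarrow \mathfrak{e}\rightarrow \mathfrak{g}\rightarrow 0$ and, since the spaces are $\Gamma$-graded and $p$ is a surjection of even degree, choose a graded even linear section $s:\mathfrak{g}\rightarrow\mathfrak{e}$ with $p\circ s=\id_{\mathfrak{g}}$; such a section exists by picking a graded complement of $i(\mathfrak{h})$ in $\mathfrak{e}$. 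Given $s$, I would define $\varphi$ and $\rho$ exactly as in \eqref{fi} and \eqref{ro}. Lemma \ref{lem1} is then invoked verbatim to show that this pair automatically satisfies \eqref{1}=\eqref{14} and \eqref{2}=\eqref{15}, so the assignment lands in the correct data set. This gives the forward map on the nose.

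For the reverse direction I would start from an abstract pair $(\varphi,\rho)$ obeying \eqref{14} and \eqref{15}, equip the graded vector space $\mathfrak{e}=\mathfrak{h}\oplus\mathfrak{g}$ with the bracket \eqref{STR}, and check that this bracket turns $\mathfrak{e}$ into a hom-Lie color algebra. Concretely I must verify that the bracket is $\varepsilon$-skew symmetric (immediate from the skew-symmetry of the $\mathfrak{h}$-bracket, of $\rho$, and the sign bookkeeping in the cross terms) and that it satisfies the $\varepsilon$-hom-Jacobi identity. This last verification is where the two hypotheses enter: expanding the cyclic sum of $[\alpha(\cdot),[\cdot,\cdot]]$ and sorting the result by its $\mathfrak{h}$-component and $\mathfrak{g}$-component, the $\mathfrak{g}$-component vanishes because $\mathfrak{g}$ is itself a hom-Lie color algebra, while the $\mathfrak{h}$-component splits into a piece controlled by \eqref{14} (the derivation/adjoint compatibility) and a piece controlled by \eqref{15} (equivalently the Bianchi identity $\delta_{\varphi}\rho=0$). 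I would then record that $i_{1}:\mathfrak{h}\hookrightarrow\mathfrak{e}$ and $\mathrm{pr}_{2}:\mathfrak{e}\twoheadrightarrow\mathfrak{g}$ form a short exact sequence, so every admissible pair genuinely produces an extension.

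Next I would address well-definedness on equivalence classes. Changing the section from $s$ to $s'=s+b$ for an even linear map $b:\mathfrak{g}\rightarrow\mathfrak{h}$ transforms the data by the formulas already computed in the text, namely $\varphi'_{x}=\varphi_{x}+ad_{k-1}^{\mathfrak{h}}(b(x))$ and $\rho'=\rho+\delta_{\varphi}b+\tfrac12[b,b]_{\wedge}$; I would present the explicit isomorphism $y+x\mapsto y-b(x)+x$ and check it intertwines the two brackets while commuting with $i_{1}$ and $\mathrm{pr}_{2}$, so that different sections of the \emph{same} extension give equivalent data. Conversely, an equivalence of extensions $f:\mathfrak{e}_{1}\rightarrow\mathfrak{e}_{2}$ restricted to the sections yields exactly such a $b$, so equivalent extensions are sent to equivalent data. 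Combining the two directions shows the maps descend to mutually inverse bijections on equivalence classes. Finally, for the split case I would observe that the extension splits precisely when some section is a hom-Lie color algebra homomorphism, i.e. when $\rho'=0$ for a suitable choice of $b$; setting $\rho'=0$ in the transformation law gives $\rho=-\delta_{\varphi}b-\tfrac12[b,b]_{\wedge}$, yielding the stated criterion.

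\textbf{Main obstacle.} The one genuinely laborious step is the hom-Jacobi verification for \eqref{STR}: the graded signs $\varepsilon(\cdot,\cdot)$ attached to each of the six twisted triple-bracket terms must be tracked carefully, and one has to confirm that the cross terms mixing $\varphi_{x}\rho$, $\rho([x,y],z)$, and $ad_{k-1}(\rho)$ reassemble \emph{exactly} into the left-hand sides of \eqref{14} and \eqref{15} with the correct commutation factors, rather than some sign-shifted variant. I expect the bookkeeping here, and the parallel check that the isomorphism $y+x\mapsto y-b(x)+x$ respects the bracket, to consume most of the work, whereas the existence of the section and the exactness of the sequence are routine.
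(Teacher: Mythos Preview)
Your proposal is correct and follows essentially the same route as the paper: the paper's proof is precisely the discussion preceding the theorem (choosing a section $s$, defining $\varphi$ and $\rho$, invoking Lemma~\ref{lem1} for \eqref{1} and \eqref{2}, writing down the bracket \eqref{STR}, and computing the effect of replacing $s$ by $s'=s+b$), capped by the sentence ``Thus, so far we have proved the following theorem.'' The only difference is that you outline the hom-Jacobi verification for \eqref{STR} and the bijection-on-equivalence-classes argument more explicitly, whereas the paper dispatches the former with ``One can check'' and leaves the latter implicit.
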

It may be a good idea to illustrate the motivation of the above theorem by an example below.
\begin{exm}
Let $\pi: B\rightarrow M=\frac{B}{G}$ be a principal bundle with structure group $G$, i.e. $B$ is a manifold with a free right action of a Lie group $G$ and $\pi$ is the projection on the orbit space $M=\frac{B}{G}$. Denote by $\mathfrak{g}=\mathfrak{X}(M)$ the Lie algebra of the vector fields on $M$, by $\mathfrak{e}=\mathfrak{X}(B)^{G}$ the Lie algebra of $G$-invariant vector fields on $B$ and by $\mathfrak{X}_{v}(B)^{G}$ the ideal of the $G$-invariant vertical vector fields of $\mathfrak{e}$. We have a natural homomorphism $\pi_{*}: \mathfrak{e} \rightarrow \mathfrak{g}$ with the kernel $\mathfrak{h}$, i.e. $\mathfrak{e}$ is an extension of $\mathfrak{g}$ by means $\mathfrak{h}$.

Note that we have a $C^{\infty}(M)$-module structure on $\mathfrak{g},\mathfrak{e},\mathfrak{h}$. In particular, $\mathfrak{h}$ is a Lie algebra over $C^{\infty}(M)$. The extension
$$0\rightarrow \mathfrak{h}\rightarrow \mathfrak{e} \rightarrow \mathfrak{g}\rightarrow 0,$$
is also an extension of $C^{\infty}(M)$-modules. Now assume that the section $s:\mathfrak{g}\rightarrow \mathfrak{e}$ is a homomorphism of $C^{\infty}(M)$-modules. Then it can be considered as a connection in the principal bundle $\pi$, and the $\mathfrak{h}$-valued 2-form $\rho$ as its curvature. This geometric concept example is a very good guideline for our approach which is dealt with in \cite{ma1,ma2}. It works also for color hom-Lie algebras. For more background information, one can see \cite{KMP}, section 9. This kind of examples with the analogy to the differential geometry was first introduced in \cite{Lecomte} and has been also used in the theory of Lie algebroids, see \cite{MAc}.
\end{exm}
In the special case of Theorem \ref{th}, we have:
\begin{cor}\label{cor1}
Let $\mathfrak{g},\mathfrak{h}$ be two hom-Lie color algebras such that $\mathfrak{h}$ has no center, i.e. $$Z(\mathfrak{h})=0.$$
Then the extensions of $\mathfrak{g}$ by $\mathfrak{h}$ is in one to one correspondence with the isomorphisms of the form
$$\bar{\varphi}:\mathfrak{g}\rightarrow out(\mathfrak{h})=\frac{Der_{\alpha^{k}}(\mathfrak{h})}{Inn_{\alpha^{k}}(\mathfrak{h})}.$$
\end{cor}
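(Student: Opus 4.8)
The plan is to reduce everything to Theorem~\ref{th}, which already identifies the equivalence classes of extensions of $\mathfrak{g}$ by $\mathfrak{h}$ with the equivalence classes of data $(\varphi,\rho)$ subject to \eqref{14} and \eqref{15}. The goal is then to show that the hypothesis $Z(\mathfrak{h})=0$ collapses such a datum to the single homomorphism $\bar{\varphi}=\pi\circ\varphi$, where $\pi:Der_{\alpha^{k}}(\mathfrak{h})\rightarrow out(\mathfrak{h})$ is the canonical projection. First I would build the forward map. Given $(\varphi,\rho)$, set $\bar{\varphi}=\pi\circ\varphi$; since $ad_{k-1}(\rho(x,y))\in Inn_{\alpha^{k}}(\mathfrak{h})$, relation \eqref{14} becomes $[\bar{\varphi}_{x},\bar{\varphi}_{y}]=\bar{\varphi}_{[x,y]}$ in $out(\mathfrak{h})$, so $\bar{\varphi}$ is an even homomorphism of hom-Lie color algebras. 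The equivalence move of Theorem~\ref{th} replaces $\varphi_{x}$ by $\varphi_{x}+ad_{k-1}^{\mathfrak{h}}(b(x))$, which differs from $\varphi_{x}$ by an inner derivation and is therefore invisible to $\pi$; hence $\bar{\varphi}$ depends only on the equivalence class, and the assignment (extension) $\mapsto \bar{\varphi}$ is well defined.

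For the inverse, I would start from a homomorphism $\bar{\varphi}:\mathfrak{g}\rightarrow out(\mathfrak{h})$, choose an even linear section of $\pi$, and let $\varphi$ be the resulting even linear lift $\mathfrak{g}\rightarrow Der_{\alpha^{k}}(\mathfrak{h})$. Because $\bar{\varphi}$ is a homomorphism, $[\varphi_{x},\varphi_{y}]-\varphi_{[x,y]}$ lands in $\ker\pi=Inn_{\alpha^{k}}(\mathfrak{h})$, so it equals $ad_{k-1}(c)$ for some $c\in\mathfrak{h}$. The decisive point is that $Z(\mathfrak{h})=0$ makes $ad_{k-1}$ injective: if $ad_{k-1}(c)=0$ then $[\alpha^{k-1}(y),c]=0$ for all $y$, and since $\alpha$ is an automorphism $\alpha^{k-1}$ is surjective, forcing $c\in Z(\mathfrak{h})=0$. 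Thus $c$ is \emph{unique}, and defining $\rho(x,y):=c$ produces a uniquely determined bilinear map satisfying \eqref{14}; its evenness and $\varepsilon$-skew-symmetry are read off from those of $[\varphi_{x},\varphi_{y}]-\varphi_{[x,y]}$ together with the same injectivity.

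It then remains to verify \eqref{15}, and here I would run the computation from the proof of Lemma~\ref{lem1} in reverse. Applying $ad_{k-1}$ to the cyclic sum $J(x,y,z):=\sum_{cyclic\{x,y,z\}}\varepsilon(x,z)\bigl(\varphi_{x}\rho(y,z)-\rho([x,y],z)\bigr)$ and substituting \eqref{14} converts it into the expression $\sum_{cyclic\{x,y,z\}}\varepsilon(x,z)\bigl([\varphi_{x},[\varphi_{y},\varphi_{z}]]-[\varphi_{x},\varphi_{[y,z]}]-[\varphi_{[x,y]},\varphi_{z}]+\varphi_{[[x,y],z]}\bigr)$, which vanishes because $Der_{\alpha^{k}}(\mathfrak{h})$ is a color Lie algebra whose bracket obeys the hom-Jacobi identity. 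Hence $ad_{k-1}(J)=0$, and injectivity of $ad_{k-1}$ gives $J=0$, i.e. \eqref{15}. By Theorem~\ref{th}, $(\varphi,\rho)$ now defines an extension. Bijectivity follows since the two passages are mutually inverse: lifting $\bar{\varphi}$ and reprojecting recovers $\bar{\varphi}$, while any two lifts $\varphi,\varphi'$ of one $\bar{\varphi}$ differ by $ad_{k-1}(b(\cdot))$ for a unique even linear $b$ (unique again by $Z(\mathfrak{h})=0$), which is exactly the equivalence of Theorem~\ref{th}, the forced $\rho,\rho'$ then satisfying $\rho'=\rho+\delta_{\varphi}b+\tfrac{1}{2}[b,b]_{\wedge}$.

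The main obstacle, and the only place the hypothesis is used essentially, is the injectivity of $ad_{k-1}$ and the consequent uniqueness of $\rho$. This is precisely what removes the cohomological freedom present in the general case (the $H^{2}$-torsor structure on abelian extensions) and turns \eqref{15} from an independent constraint into an automatic consequence of the color Jacobi identity. In the center-nonzero setting neither the uniqueness of $\rho$ nor the automatic validity of \eqref{15} survives, and one instead meets an obstruction class in the third cohomology; the content of the corollary is that $Z(\mathfrak{h})=0$ annihilates that obstruction, so that \emph{every} homomorphism $\bar{\varphi}:\mathfrak{g}\rightarrow out(\mathfrak{h})$ lifts to an extension, and does so up to equivalence uniquely.
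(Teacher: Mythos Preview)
Your argument is correct and follows essentially the same route as the paper: build $\bar{\varphi}=\pi\circ\varphi$ and check it is a homomorphism via \eqref{14}, then in the converse direction lift $\bar{\varphi}$ linearly, use that $[\varphi_x,\varphi_y]-\varphi_{[x,y]}$ is inner together with $Z(\mathfrak{h})=0$ to extract a unique $\rho$ satisfying \eqref{1}, and finally derive \eqref{2}. The paper's proof is terser---it dispatches \eqref{2} with ``it is easy to obtain'' and does not spell out well-definedness on equivalence classes or the bijectivity---whereas you supply these steps explicitly (in particular the clean trick of hitting the cyclic sum with $ad_{k-1}$ and invoking its injectivity), and you also make the regularity of $\alpha$ explicit where it is needed for $ad_{k-1}$ to be injective.
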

\begin{proof}
If $(\varphi,\rho)$ is a datum, the map $\bar{\varphi} :\mathfrak{g} \rightarrow \frac{Der_{\alpha^{k}}(\mathfrak{h})}{Inn_{\alpha^{k}}(\mathfrak{h})}$ defined by
$$\xymatrix{\mathfrak{g} \ar[r]^{\!\!\!\!\!\varphi} & Der_{\alpha^{k}}(\mathfrak{h}) \ar[r]^{\pi} & \frac{Der_{\alpha^{k}}(\mathfrak{h})}{Inn_{\alpha^{k}}(\mathfrak{h})}},$$
$$\bar{\varphi}=\pi \circ \varphi,$$
is a hom-Lie color algebra homomorphism, because
\begin{align}\label{18}
\bar{\varphi}_{[x,y]}&=\pi(\varphi_{[x,y]})=\pi([\varphi_{x},\varphi_{y}]-ad_{k-1}(\rho(x,y))\\ \nonumber
&=\pi([\varphi_{x},\varphi_{y}])=[\pi\circ\varphi_{x},\pi\circ\varphi_{y}]=[\bar{\varphi}_{x},\bar{\varphi}_{y}]. \nonumber
\end{align}
Conversely, suppose we have the map $\bar{\varphi}$. A linear lift $$\varphi:\mathfrak{g}\rightarrow Der_{\alpha^{k}}(\mathfrak{h})$$ can be considered. Since $\bar{\varphi}$ is a hom-Lie color algebra homomorphism and $\mathfrak{h}$ has no center, there exists a skew symmetric unique linear map $\rho: \mathfrak{g} \times \mathfrak{g} \rightarrow \mathfrak{h}$ such that
$$[\varphi_{x},\varphi_{y}]-\varphi_{[x,y]}=ad_{k-1}(\rho(x,y)).$$
So the equality \eqref{1} is fulfilled. Also it is easy to obtain \eqref{2}.
\end{proof}

\section{Cohomological obstruction to existence of extensions} \label{sec:CohomObstruction}
In this section we present a proposition which shows that if there exists a hom-Lie color algebra extension, there should be a trivial member of the third cohomology. We have to give some remarks first.
\begin{rem}
The hom-Lie color algebra $\mathfrak{h}$ is a $Der_{\alpha^{k}}(\mathfrak{h})$-module with the multiplication rule
$$Der_{\alpha^{k}}(\mathfrak{h})\times \mathfrak{h}\rightarrow \mathfrak{h},(h,x)\mapsto h(x),$$
and $Z(\mathfrak{h})$ is a submodule of $\mathfrak{h}$ with this multiplication, i.e. $h(x)\in Z(\mathfrak{h})$, for all $x\in Z(\mathfrak{h})$ , $h\in Der_{\alpha^{k}}(\mathfrak{h})$, because
$$[h(x),y]=h([x,y])-[x,h(y)]=0,$$
for all $y\in \mathfrak{h}$. Thus $h(x)\in Z(\mathfrak{h})$.
Also for all $\bar{h}\in \frac{Der_{\alpha^{k}}(\mathfrak{h})}{Inn_{\alpha^{k}}(\mathfrak{h})}$, there exists some $h\in Der_{\alpha^{k}}(\mathfrak{h})$ such that $\bar{h}=[h]$ and one can see $Z(\mathfrak{h})$ as a module on $\frac{Der_{\alpha^{k}}(\mathfrak{h})}{Inn_{\alpha^{k}}(\mathfrak{h})}$; it is sufficient to define the multiplication in the following way
$$\bar{h}.x=h(x),$$
for all $x\in Z(\mathfrak{h})$ and $\bar{h}\in \frac{Der_{\alpha^{k}}(\mathfrak{h})}{Inn_{\alpha^{k}}(\mathfrak{h})}$. Note that this definition is well defined since for $\bar{h}=[h']$ we have
$$h'=h+ad_{k-1}(a),$$
so $h'(x)=h(x)+ad_{k-1}(a)(x)=h(x)+[a,x]=h(x),$ since $x$ is in the center of $\mathfrak{h}$ and $a\in \mathfrak{h}$.
Now, using the module structure of $Z(\mathfrak{h})$ on $\frac{Der_{\alpha^{k}}(\mathfrak{h})}{Inn_{\alpha^{k}}(\mathfrak{h})}$, we can give $\mathfrak{g}$ a module structure by the map $\bar{\varphi}$, i.e. for $c\in\mathfrak{g}$ and $x\in Z(\mathfrak{h})$ we put
$$cx=\bar{\varphi}(c)x~~~~~~~~~.$$

\end{rem}
\begin{rem}
For hom-Lie color algebra homomorphism $\bar{\varphi} :\mathfrak{g} \rightarrow \frac{Der_{\alpha^{k}}(\mathfrak{h})}{Inn_{\alpha^{k}}(\mathfrak{h})}$, if $V$ is a vector space which has a $\frac{Der_{\alpha^{k}}(\mathfrak{h})}{Inn_{\alpha^{k}}(\mathfrak{h})}$-module structure, one can consider the space of all $k$-linear forms on $\mathfrak{g}$ with values in $\mathfrak{h}$ which is denoted by $\bigwedge^{k}(\mathfrak{g},\mathfrak{h})$.\\
We can construct $\delta_{\bar{\varphi}}$ like $\delta_{\varphi}$. First the exterior multiplication in $\bar{\varphi}$ is defined. For all $\psi\in \bigwedge^{k}(\mathfrak{g},\mathfrak{h})$, $\bar{\varphi}\wedge\psi$ is in $\bigwedge^{k+1}(\mathfrak{g},\mathfrak{h})$ and acts in the following way
\begin{align*}
&(\bar{\varphi}\wedge\psi)(x_{0},...,x_{k})
=\frac{1}{2!k!}\sum_{\sigma\in \mathcal{S}_{k+2}} sign(\sigma) \varepsilon(y,\eta_{k}(\sigma,x)) \\ &\varepsilon_{k+2}(\sigma,x)\bar{\varphi}_{x_{\sigma(i)}} \psi(x_{\sigma(2)},...,x_{\sigma (k+2)}),
\end{align*}

and
$$\delta_{\bar{\varphi}}:\bigwedge^{k}(\mathfrak{g},\mathfrak{h})\rightarrow \bigwedge^{k+1}(\mathfrak{g},\mathfrak{h}),$$
\begin{align*}
(\delta_{\bar{\varphi}}\psi)(x_{0},...,x_{k})&=\sum_{i=0}^{k}(-1)^{i}\theta_{i}(x)\bar{\varphi}_{x_{i}}(\psi(x_{0},...,\widehat{x_{i}},...,x_{k}))\\
&+\sum_{i<j}(-1)^{j}\theta_{ij}(x)\psi(\alpha(x_{0}),...,\alpha(x_{i-1}),\\
&[x_{i},x_{j}],\alpha(x_{i-1}),...,\widehat{x_{j}},...,\alpha(x_{p})).
\end{align*}
In the special case for $V=Z(\mathfrak{h})$, $Z(\mathfrak{h})$ is a $\frac{Der_{\alpha^{k}}(\mathfrak{h})}{Inn_{\alpha^{k}}(\mathfrak{h})}$-module and if we consider $$\varphi:\mathfrak{g}\rightarrow Der_{\alpha^{k}}(\mathfrak{h}),$$ to be such that $\bar{\varphi}=\pi \circ \varphi$, where $$\pi:Der_{\alpha^{k}}(\mathfrak{h})\rightarrow \frac{Der_{\alpha^{k}}(\mathfrak{h})}{Inn_{\alpha^{k}}(\mathfrak{h})}.$$
 Then $Z(\mathfrak{h})$ is also a $Der_{\alpha^{k}}(\mathfrak{h})$-module and
$$\delta_{\varphi}:\bigwedge^{k}(\mathfrak{g},Z(\mathfrak{h}))\rightarrow \bigwedge^{k+1}(\mathfrak{g},Z(\mathfrak{h}))$$
is defined too and in this case we have $\delta_{\varphi}=\delta_{\bar{\varphi}}$, since $\bar{\varphi}_{x_{i}}=[\varphi_{x_{i}}]$. Note that the multiplication rule between $\frac{Der_{\alpha^{k}}(\mathfrak{h})}{Inn_{\alpha^{k}}(\mathfrak{h})}$ and $Z(\mathfrak{h})$ is
$$[h]x=h(x), h\in Der_{\alpha^{k}}(\mathfrak{h}).$$

Since $Z(\mathfrak{h})$ is the center of $\mathfrak{h}$, the operator $\delta_{\bar{\varphi}}$ or $\delta_{\varphi}$ satisfies $\delta_{\varphi}\circ \delta_{\varphi}=0$, i.e.
\begin{align*}
&\delta_{\varphi}\circ \delta_{\varphi}(\psi)(x_{1},...,x_{k+1})=[\rho,\psi]_{\wedge}(x_{1},...,x_{k+1})\\
&=\frac{1}{2!k!}\sum_{\sigma\in \mathcal{S}_{k+2}}sign(\sigma)\varepsilon(z,\eta_{p}(\sigma,x))\varepsilon_{k+2}(\sigma,x) \\ &[\psi(x_{\sigma(2)},x_{\sigma(2)}), \zeta(x_{\sigma(3)},...,x_{\sigma(k+2)})]=0,
\end{align*}
for $\psi \in \bigwedge^{k+1}(\mathfrak{g},Z(\mathfrak{h}))$ and $x_{1},...,x_{k+1}\in \mathfrak{g}$.
\end{rem}
\begin{thm}
Let $\mathfrak{g},\mathfrak{h}$ be two hom-Lie color algebras and $\bar{\varphi}:\mathfrak{g}\rightarrow \frac{Der_{\alpha^{k}}(\mathfrak{h})}{Inn_{\alpha^{k}}(\mathfrak{h})}$ a hom-Lie color algebra homomorphism. Then the followings are equivalent:
\begin{itemize}
\item[1.] For any linear lift $\varphi:\mathfrak{g}\rightarrow Der_{\alpha^{k}}(\mathfrak{h})$ of $\bar{\varphi}$, one can find a linear graded map
    $$\rho:\bigwedge^{2}_{graded}\mathfrak{g}\rightarrow\mathfrak{h},$$
    of even degree such that
    $$[\varphi_{x},\varphi_{y}]-\varphi_{[x,y]}=ad_{k-1}(\rho(x,y)).$$
    In this case the $\delta_{\bar{\varphi}}$-cohomology classes $\lambda$ will be trivial in $H^{3}(\mathfrak{g},Z(\mathfrak{h}))$, where
    $$\lambda=\lambda(\varphi,\rho):=\delta_{\varphi}(\rho):\bigwedge^{3}\mathfrak{g}\rightarrow Z(\mathfrak{h}).$$

\item[2.] There exists an extension $0\rightarrow \mathfrak{h}\rightarrow \mathfrak{e} \rightarrow \mathfrak{g}\rightarrow 0$ which induces the homomorphism $\bar{\varphi}$. In this case all the extensions $0\rightarrow \mathfrak{h}\rightarrow \mathfrak{e} \rightarrow \mathfrak{g}\rightarrow 0$ inducing $\bar{\varphi}$ will be parameterized by $H^{2}(\mathfrak{g},Z(\mathfrak{h}))$, where $H^{2}(\mathfrak{g},Z(\mathfrak{h}))$ is the second cohomology space of $\mathfrak{g}$ with values in $Z(\mathfrak{h})$ which here is considered as a $\mathfrak{g}$-module by $\bar{\varphi}$.
\end{itemize}
\end{thm}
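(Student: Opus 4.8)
The plan is to treat this as an obstruction-theoretic statement in the style of classical non-Abelian extension theory, reading $\lambda=\delta_{\varphi}\rho$ as the obstruction cocycle. The backbone is Theorem \ref{th}, which identifies an extension inducing $\bar\varphi$ with a datum $(\varphi,\rho)$ obeying both \eqref{14} and \eqref{15}; the whole statement thus reduces to deciding when a chosen lift $\varphi$ can be completed to such a datum. First I would note that a $\rho$ satisfying \eqref{14} always exists: since $\bar\varphi=\pi\circ\varphi$ is a homomorphism, $\pi([\varphi_{x},\varphi_{y}]-\varphi_{[x,y]})=[\bar\varphi_{x},\bar\varphi_{y}]-\bar\varphi_{[x,y]}=0$, so $[\varphi_{x},\varphi_{y}]-\varphi_{[x,y]}$ lies in $Inn_{\alpha^{k}}(\mathfrak{h})$ and equals $ad_{k-1}(\rho(x,y))$ for some $\rho(x,y)\in\mathfrak{h}$; a graded, even, $\varepsilon$-skew choice can be fixed, the remaining ambiguity being addition of a $Z(\mathfrak{h})$-valued map because $\ker ad_{k-1}=Z(\mathfrak{h})$. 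Hence the genuine content of item $1$ is the vanishing of $[\lambda]$, and the only thing separating \eqref{14} from a full datum is the Bianchi identity \eqref{15}.

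The crux is to show that $\lambda=\delta_{\varphi}\rho$ is $Z(\mathfrak{h})$-valued and defines a well-defined class in $H^{3}(\mathfrak{g},Z(\mathfrak{h}))$. I would apply $ad_{k-1}$ to $\lambda(x,y,z)$ and use the derivation identity $ad_{k-1}(\varphi_{x}m)=[\varphi_{x},ad_{k-1}(m)]$ together with \eqref{14}; the result is exactly the cyclic sum
$$\sum_{cyclic\{x,y,z\}}\varepsilon(x,z)\bigl([\varphi_{x},[\varphi_{y},\varphi_{z}]]-[\varphi_{x},\varphi_{[y,z]}]-[\varphi_{[x,y]},\varphi_{z}]+\varphi_{[[x,y],z]}\bigr),$$
which vanishes by the hom-Jacobi identities in $Der_{\alpha^{k}}(\mathfrak{h})$ and in $\mathfrak{g}$, precisely as in the proof of Lemma \ref{lem1}. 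Thus $ad_{k-1}(\lambda)=0$, so $\lambda$ takes values in $Z(\mathfrak{h})$, and by $\delta_{\varphi}\circ\delta_{\varphi}=0$ on $Z(\mathfrak{h})$-valued forms (the Remark following \eqref{209}) it is a cocycle. Well-definedness follows because replacing $\rho$ by another solution of \eqref{14} changes it by a $Z(\mathfrak{h})$-valued $\mu$, whence $\lambda$ changes by the coboundary $\delta_{\varphi}\mu$; changing the lift $\varphi\mapsto\varphi+ad_{k-1}\circ b$ and adjusting $\rho$ accordingly likewise alters $\lambda$ by a coboundary.

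With this in hand the equivalence is quick. For the implication $2\Rightarrow 1$, an extension yields via Theorem \ref{th} a datum satisfying \eqref{15}, i.e. $\lambda=\delta_{\varphi}\rho=0$ for that lift, so $[\lambda]=0$ for every lift by well-definedness. For $1\Rightarrow 2$, triviality of $[\lambda]$ gives $\lambda=\delta_{\varphi}\nu$ with $\nu$ a $Z(\mathfrak{h})$-valued $2$-form; setting $\rho'=\rho-\nu$ preserves \eqref{14} since $\nu$ is central and forces $\delta_{\varphi}\rho'=0$, so $(\varphi,\rho')$ is a genuine datum and Theorem \ref{th} produces an extension, which induces $\bar\varphi$ by construction. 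The $H^{2}$-parameterization is then a torsor statement: fixing one solution, any other datum $(\varphi,\rho')$ inducing $\bar\varphi$ differs by $\rho'-\rho$, which is $Z(\mathfrak{h})$-valued by \eqref{14} and $\delta_{\varphi}$-closed by \eqref{15}, hence a $2$-cocycle, while equivalences via a central $b$ shift $\rho$ by the coboundary $\delta_{\varphi}b$; so equivalence classes of such extensions biject with $H^{2}(\mathfrak{g},Z(\mathfrak{h}))$.

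The main obstacle I expect lies in the second step: verifying cleanly that $\lambda$ is $Z(\mathfrak{h})$-valued and that $[\lambda]$ is independent of the choices of $\varphi$ and $\rho$. This demands carrying the computation of the proof of Lemma \ref{lem1} through with all the $\varepsilon$-signs and $\alpha$-twists intact, and checking that the full ambiguity in $(\varphi,\rho)$ corresponds exactly to coboundaries, so that triviality in $H^{3}$ and the affine structure over $H^{2}$ both come out correctly.
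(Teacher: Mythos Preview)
Your proposal is correct and follows essentially the same route as the paper: show $\lambda=\delta_{\varphi}\rho$ lands in $Z(\mathfrak{h})$ via the Lemma~\ref{lem1} computation, check it is closed using $\delta_{\varphi}\delta_{\varphi}=[\rho,\cdot]_{\wedge}$, verify the class is independent of the choices of $\varphi$ and $\rho$, and then run the two implications and the $H^{2}$-torsor argument exactly as you describe. The only minor difference is that you explicitly argue the existence of some $\rho$ satisfying \eqref{14} from the fact that $\bar\varphi$ is a homomorphism (the paper treats this as given in the hypothesis), and the paper asserts that under the specific adjustment $\rho'=\rho+\delta_{\varphi}b+\tfrac{1}{2}[b,b]_{\wedge}$ the cochain $\lambda$ is literally unchanged rather than merely shifted by a coboundary; neither point affects the argument.
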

\begin{proof}
Using the calculations in the proof of Corollary \ref{cor1} we obtain
$$ad_{k-1}(\lambda(x,y,z))=ad_{k-1}(\delta_{\varphi}\rho(x,y,z)).$$
Therefore $\lambda(x,y,z)\in Z(\mathfrak{h})$. The hom-Lie color algebra $out(\mathfrak{h})= \frac{Der_{\alpha^{k}}(\mathfrak{h})}{Inn_{\alpha^{k}}(\mathfrak{h})}$ acts on $Z(\mathfrak{h})$, so $Z(\mathfrak{h})$ is a $\mathfrak{g}$-module by $\bar{\varphi}$ and $\delta_{\bar{\varphi}}$ is the cohomology differential.
Using \eqref{209} we have
$$\delta_{\bar{\varphi}}=\delta_{\varphi}\delta_{\varphi}\rho=[\rho,\rho]_{\wedge}=0,$$
therefore,
$$[\lambda]\in H^{3}(\mathfrak{g},Z(\mathfrak{h})).$$
We must show that the cohomology class $[\lambda]$ is independent of the choice of $\varphi$.
If we have $(\varphi,\rho)$ like  above and choose another linear lift $\varphi':\mathfrak{g}\rightarrow Der_{\alpha^{k}}(\mathfrak{h})$, then for a linear map $b:\mathfrak{g}\rightarrow\mathfrak{h}$ , $\varphi'(x)=\varphi(x)+ad_{k-1}(b(x))$. We set
$$\rho':\bigwedge^{2}_{graded}\mathfrak{g}\rightarrow\mathfrak{h},$$
$$\rho'(x,y)= \rho(x,y)+ (\delta_{\varphi}b)(x,y)+ [b(x),b(y)].$$
By calculations similar to Lemma \ref{lem1} we obtain
$$[\varphi'_{x},\varphi'_{y}]-\varphi'_{[x,y]}=ad_{k-1}(\rho'(x,y)),$$
and by the last part of Theorem \ref{th},
$$\lambda(\varphi,\rho)=\delta_{\varphi}\rho=\delta_{\varphi'}\rho'=\lambda(\varphi',\rho'),$$
so the cochain $\lambda$ remains unchanged. For a constant $\varphi$ let $\rho,\rho'$ be defined like
$$\rho,\rho':\bigwedge^{2}_{graded}\mathfrak{g}\rightarrow\mathfrak{h}, $$ $$[\varphi_{x},\varphi_{y}]-\varphi_{[x,y]}=ad_{k-1}(\rho(x,y))=ad_{k-1}(\rho'(x,y)).$$
Therefore
$$\rho-\rho':=\nu:\bigwedge^{2}_{graded}\mathfrak{g}\rightarrow Z(\mathfrak{h}).$$
It is obvious that
$$\lambda(\varphi,\rho)-\lambda(\varphi,\rho')=\delta_{\varphi}\rho=\delta_{\varphi}\rho'= \delta_{\bar{\varphi}}\nu.$$
 Now if there exists an extension inducing $\bar{\varphi}$, $\rho$ can be found like Theorem \ref{th} for each lift $\varphi$ such that $\lambda(\varphi,\rho)=0$. On the other hand, for a given $(\varphi,\rho)$ as described in Theorem \ref{th} such that
$$[\lambda(\varphi,\rho)]=0\in H^{3}(\mathfrak{g},Z(\mathfrak{h})),$$
there exists $\nu:\bigwedge^{2}_{graded}\mathfrak{g}\rightarrow Z(\mathfrak{h})$ such that $\delta_{\bar{\varphi}}\nu=\lambda$, therefore
$$ad_{k-1}((\rho-\nu)(x,y))=ad_{k-1}(\rho(x,y)),\delta_{\varphi}(\rho-\nu)=0.$$
Thus $(\varphi,\rho-\nu)$ satisfies the conditions of Theorem \ref{th}, so it describes an extension inducing $\bar{\varphi}$.
Now consider the linear lift $\varphi$ and a map $\rho:\bigwedge^{2}_{graded}\mathfrak{g}\rightarrow \mathfrak{h}$ satisfying \eqref{14} and \eqref{15} and note all $\rho'$s which satisfy this condition. We have
$$\rho-\rho':=\nu:\bigwedge^{2}_{graded}\mathfrak{g}\rightarrow Z(\mathfrak{h}),$$
and
$$\delta_{\bar{\varphi}}\nu=\delta_{\varphi}\rho-\delta_{\varphi}\rho'=0-0=0,$$
so $\nu$ is a 2-cocycle.\\
Moreover, analogous to Theorem \ref{th}, using a linear map $b:\mathfrak{g}\rightarrow\mathfrak{h}$ which preserves $\varphi$, i.e. $b:\mathfrak{g}\rightarrow Z(\mathfrak{h})$, one can use the corresponding data. Also $\rho'$ can be found using \eqref{18}
$$\rho'=\rho+\delta_{\varphi}b+\frac{1}{2}[b,b]_{\wedge}=\rho+\delta_{\bar{\varphi}}b.$$
Thus, it is just the cohomology class of $\nu$ that matters.
\end{proof}
The following corollary is an obvious consequence of the above theorem.
\begin{cor}
  Let $\mathfrak{g}$ and $\mathfrak{h}$ be two hom-Lie color algebras such that $\mathfrak{h}$ is Abelian. Then the cohomology classes of extensions of $\mathfrak{g}$ by $\mathfrak{h}$ are in one to one correspondence with all $(\varphi,[\rho])$s where $\varphi:\mathfrak{g}\rightarrow der(\mathfrak{h})$ is a hom-Lie color algebra homomorphism and $[\rho]\in H^{2}(\mathfrak{g},\mathfrak{h})$ is a graded cohomology class where $\mathfrak{h}$ is considered as a $\mathfrak{g}$-module by $\varphi$.
\end{cor}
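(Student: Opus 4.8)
The plan is to specialize Theorem \ref{th} (together with the obstruction discussion preceding this corollary) to the case in which the bracket on $\mathfrak{h}$ vanishes identically, and then simply read off what the data $(\varphi,\rho)$ and their equivalence become. The whole argument is bookkeeping: no new computation is needed beyond tracking which terms die when $\mathfrak{h}$ is Abelian. First I would record the two structural simplifications. Because the bracket on $\mathfrak{h}$ is zero, every inner derivation vanishes, so $Inn_{\alpha^{k}}(\mathfrak{h})=0$ and hence $out(\mathfrak{h})=Der_{\alpha^{k}}(\mathfrak{h})=der(\mathfrak{h})$; moreover $Z(\mathfrak{h})=\mathfrak{h}$. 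In particular $ad_{k-1}$ is identically zero on $\mathfrak{h}$, so the projection $\pi$ is the identity and $\bar{\varphi}=\varphi$.

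Next I would feed these simplifications into the defining relations \eqref{14} and \eqref{15}. Since $ad_{k-1}(\rho(x,y))=0$, relation \eqref{14} collapses to $[\varphi_{x},\varphi_{y}]=\varphi_{[x,y]}$, which is exactly the statement that $\varphi:\mathfrak{g}\to der(\mathfrak{h})$ is a hom-Lie color algebra homomorphism; thus in the Abelian case the datum $\varphi$ is completely decoupled from $\rho$ and is constrained only to be a homomorphism. Relation \eqref{15} is unchanged and reads $\delta_{\varphi}\rho=0$, i.e. $\rho$ is a graded $2$-cocycle for the $\mathfrak{g}$-module structure on $\mathfrak{h}$ induced by $\varphi$. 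Hence a datum in the sense of Theorem \ref{th} is precisely a pair consisting of a homomorphism $\varphi$ and a cocycle $\rho$ with $\delta_{\varphi}\rho=0$.

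Then I would specialize the equivalence relation of Theorem \ref{th}. With $ad_{k-1}=0$ the rule $\varphi'_{x}=\varphi_{x}+ad_{k-1}^{\mathfrak{h}}(b(x))$ forces $\varphi'=\varphi$, so equivalent data share the same homomorphism; and with $[b(x),b(y)]=0$ the rule $\rho'=\rho+\delta_{\varphi}b+\tfrac{1}{2}[b,b]_{\wedge}$ collapses to $\rho'=\rho+\delta_{\varphi}b$. Thus two data $(\varphi,\rho)$ and $(\varphi,\rho')$ are equivalent exactly when $\rho-\rho'$ is a coboundary, that is $[\rho]=[\rho']$ in $H^{2}(\mathfrak{g},\mathfrak{h})$. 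Combining this with the previous paragraph, Theorem \ref{th} yields a bijection between equivalence classes of extensions and pairs $(\varphi,[\rho])$ with $\varphi$ a homomorphism and $[\rho]\in H^{2}(\mathfrak{g},\mathfrak{h})$.

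The only point that is not purely formal, and which I would address last, is that the correspondence is genuinely onto in the $\varphi$-coordinate: every homomorphism $\varphi$ must be realized by some extension. This is settled by observing that $\rho=0$ is always a $2$-cocycle and that $(\varphi,0)$ satisfies both \eqref{14} and \eqref{15}, so it defines the split (semidirect-product) extension inducing $\varphi$. Equivalently, the third-cohomology obstruction of the preceding theorem vanishes automatically in the Abelian setting, since $\rho$ may be chosen to be $0$ and thus $\delta_{\varphi}\rho=0$. Therefore for each homomorphism $\varphi$ an extension exists, and the fibre over $\varphi$ is parameterized by $H^{2}(\mathfrak{g},\mathfrak{h})$, giving the claimed one-to-one correspondence between cohomology classes of extensions and pairs $(\varphi,[\rho])$.
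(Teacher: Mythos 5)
Your proposal is correct and matches the paper's intent: the paper gives no written argument beyond calling the corollary ``an obvious consequence of the above theorem,'' and your specialization of Theorem \ref{th} and the obstruction theorem to the Abelian case (where $Inn_{\alpha^{k}}(\mathfrak{h})=0$, $Z(\mathfrak{h})=\mathfrak{h}$, \eqref{14} collapses to $\varphi$ being a homomorphism, the $[b,b]_{\wedge}$ term dies, and $(\varphi,0)$ realizes any $\varphi$ via the split extension) is exactly the bookkeeping the authors leave implicit. Your final paragraph on surjectivity in the $\varphi$-coordinate is a worthwhile detail the paper omits, but it does not change the route.
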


\section{Acknowledgement}
The research in this paper has been supported by grant no. 92grd1m82582 of Shiraz University, Shiraz, IRAN.  A. Armakan is grateful to Mathematics and Applied Mathematics research environment MAM, Division of Applied Mathematics, School of Education, Culture and Communication at M{\"a}lardalen University, V{\"a}ster{\aa}s, Sweden for creating excellent research environment during his visit from September 2016 to March 2017.


\end{document}